\documentclass[11pt,reqno]{amsart}

\usepackage[utf8]{inputenc}
\usepackage[T1]{fontenc}
\usepackage{amsmath,amssymb,amsthm}
\usepackage{mathtools}
\usepackage{geometry}
\usepackage{hyperref}
\hypersetup{colorlinks=true,linkcolor=blue,citecolor=blue,urlcolor=blue}
\usepackage{enumitem}

\newtheorem{theorem}{Theorem}[section]
\newtheorem{proposition}[theorem]{Proposition}

\theoremstyle{definition}
\newtheorem{definition}[theorem]{Definition}
\newtheorem{remark}[theorem]{Remark}
\newtheorem{example}[theorem]{Example}

\newcommand{\M}{\mathcal{M}}
\newcommand{\dv}{dv(g)}
\newcommand{\ds}{d\sigma(g)}

\title[Quadrature Surfaces on Riemannian Manifolds]{On the Quadrature Surface Free Boundary Problem on Riemannian Manifolds: Sufficient Condition of Existence}

\author{Mohammed Barkatou}
\address{ISTM Laboratory, Chouaib Doukkali University, El Jadida, Morocco}
\email{barkatou.m@ucd.ac.ma}

\date{\today}

\begin{document}

\begin{abstract}
We study the quadrature surface free boundary problem on a compact Riemannian manifold $(\M,g)$ of dimension $N \geq 2$. Given a positive source $f \in L^2_g(\M)$ with compact support $K$, and $k>0$, one seeks a domain $\Omega \supset K$ on which the overdetermined problem $-\Delta_g u = f$ in $\Omega$, $u=0$ and $|\nabla_g u|_g = k$ on $\partial\Omega$ admits a solution. Following the geometric and variational framework of Djit\'e--Seck, we prove that the integral condition
\[
(NS)_g \qquad \int_C f\,dv(g) > k\,|\partial C|_g,
\]
where $C$ is the totally convex hull of $K$, is \emph{sufficient} for the existence of a solution strictly containing $C$. We then show, by an explicit counterexample on the round sphere $S^n_R$, that $(NS)_g$ is \emph{not necessary} in general under nonnegative curvature: the phenomenon arises because the perimeter is not monotone with respect to inclusion in positive curvature. Finally, we identify an additional geometric hypothesis---perimeter monotonicity within a normal convex neighborhood of $C$---under which necessity is restored, and we discuss the critical equality case $\int_C f = k|\partial C|_g$.
\end{abstract}

\maketitle

\section{Introduction}

\subsection{The quadrature surface problem}

Let $(\M,g)$ be a compact, connected, oriented Riemannian manifold without boundary, of dimension $N \geq 2$. Let $f \in L^2_g(\M)$ be a positive function with compact support $K = \operatorname{supp} f$, and let $k > 0$. The \emph{quadrature surface free boundary problem}, denoted $QS(f,k)$, consists in finding a domain $\Omega \subset \M$ with $K \subset \Omega$ and a function $u_\Omega \in H^1_0(\Omega)$ such that
\begin{equation}\label{eq:overdet}
\begin{cases}
-\Delta_g u_\Omega = f & \text{in } \Omega,\\
u_\Omega = 0 & \text{on } \partial\Omega,\\
|\nabla_g u_\Omega|_g = k & \text{on } \partial\Omega.
\end{cases}
\end{equation}
The third condition is the overdetermined free boundary condition. The problem originates in potential theory and fluid mechanics; see \cite{Barkatou2010, BarkatouSeckLy2005, GustafssonShahgholian1996}.

In the Euclidean setting, Barkatou \cite{Barkatou2010} proved the following sharp characterization. Let $C$ be the convex hull of $K$. Then $QS(f,k)$ admits a solution $\Omega$ strictly containing $C$ if and only if
\begin{equation}\label{eq:NS-eucl}
\int_C f(x)\,dx > k\,|\partial C|.
\end{equation}
The purpose of this paper is to investigate to what extent \eqref{eq:NS-eucl} generalizes to the Riemannian setting, where $C$ is replaced by the totally convex hull of $K$.

\subsection{Main contributions}

Our contributions are threefold.

\begin{enumerate}[label=(\roman*)]
\item \textbf{Sufficiency.} We prove (Theorem \ref{thm:suff}) that, under suitable regularity hypotheses and assuming the Riemannian $RC$--GNP condition of Djit\'e--Seck \cite{DjiteSeck2026a}, the condition
\[
(NS)_g \qquad \int_C f\,dv(g) > k\,|\partial C|_g
\]
is sufficient for the existence of a solution $\Omega^*$ strictly containing $C$.

\item \textbf{Failure of necessity.} We construct (Section \ref{sec:counter}) an explicit family of counterexamples on the round sphere $S^n_R$ showing that $(NS)_g$ is \emph{not} necessary in general. More precisely, for every $n \geq 2$ and $R>0$ there exist a radial source $f$ supported in a small geodesic ball $C = B_g(p,r_0)$ and a constant $k>0$ such that $QS(f,k)$ admits a solution $\Omega$ strictly containing $C$, while
\[
\int_C f\,dv(g) < k\,|\partial C|_g.
\]
The mechanism is that on positively curved manifolds the perimeter is not monotone with respect to inclusion: a larger geodesic ball can have smaller perimeter.

\item \textbf{Conditional necessity.} We show (Theorem \ref{thm:neccond}) that if one additionally assumes that the admissible domains $\Omega$ are contained in a normal convex neighborhood $U$ of $C$ in which the perimeter functional is monotone with respect to inclusion, then $(NS)_g$ is necessary. This clarifies the geometric obstruction to a sharp Riemannian analogue of \eqref{eq:NS-eucl}.
\end{enumerate}

We also discuss the critical equality case $\int_C f = k|\partial C|_g$ in Section \ref{sec:equality}.

\subsection{Relation to previous works}

The geometric and variational framework we use is that of Djit\'e--Seck \cite{DjiteSeck2026a}, who introduced the Riemannian $RC$--GNP condition, proved compactness of the admissible class, and established stability of the Dirichlet problem under domain convergence. The $p$-Laplacian analogue is treated in \cite{DjiteSeck2026b}. The Euclidean result goes back to \cite{Barkatou2010, BarkatouSeckLy2005}. Shape-optimization tools on manifolds are taken from \cite{SokolowskiZolesio1992, BucurTrebeschi1998}.

\section{Preliminaries}\label{sec:prelim}

\subsection{Riemannian setting}

Let $(\M,g)$ be a compact, oriented Riemannian manifold of dimension $N \geq 2$ without boundary. We denote by $\dv$ the Riemannian volume element and by $\ds$ the induced $(N-1)$-dimensional measure on hypersurfaces. The Laplace--Beltrami operator is
\[
\Delta_g u = \frac{1}{\sqrt{\det g}}\,\partial_i\!\left(\sqrt{\det g}\,g^{ij}\partial_j u\right).
\]
For a smooth domain $\Omega \subset \M$, $\nu$ denotes the outward unit normal and $g(\nabla_g u, \nu)$ the normal derivative. We write $|\Omega|_g$ for the volume and $|\partial\Omega|_g$ for the perimeter. We shall use the maximum principle for $-\Delta_g$, the Rellich--Kondrachov compactness theorem, the Poincar\'e inequality, and the strong maximum principle on compact manifolds \cite{Aubin2012, Hebey2000}.

\subsection{Totally convex hull and regularity hypotheses}

\begin{definition}\label{def:convexhull}
Let $K \subset \M$ be compact. The \emph{totally convex hull} of $K$, denoted $C = \operatorname{tconv}(K)$, is the smallest closed subset of $\M$ containing $K$ with the property that every minimizing geodesic joining two of its points is entirely contained in it.
\end{definition}

Throughout this paper, we assume:
\begin{itemize}
\item[(H1)] $K = \operatorname{supp} f$ is compact and $f \in L^\infty(\M)$, $f > 0$ on $K$.
\item[(H2)] The totally convex hull $C$ of $K$ is a $C^{2,\alpha}$ domain ($0<\alpha<1$), strictly convex, with $\partial C$ smooth.
\item[(H3)] $(\M,g)$ has nonnegative sectional curvature.
\item[(H4)] The unique weak solutions $u_\Omega$ of the Dirichlet problem on admissible domains $\Omega$ belong to $C^{2,\alpha}(\overline{\Omega})$.
\end{itemize}

\begin{remark}\label{rem:reg}
Hypothesis (H4) is automatically satisfied if $f \in C^{0,\alpha}(\overline{\Omega})$ and $\partial\Omega \in C^{2,\alpha}$, by the classical Schauder theory for the Laplace--Beltrami operator. Under weaker assumptions on $f$ (for instance $f$ bounded away from zero), one can invoke the free boundary regularity theory of Alt--Caffarelli adapted to Riemannian manifolds; see \cite{GustafssonShahgholian1996} and the discussion in Section \ref{sec:final}. If one prefers to avoid (H4), the results below may be interpreted in the weak $H^1$ sense.
\end{remark}

\subsection{The Riemannian $RC$--GNP condition}

We adopt the $RC$--GNP condition introduced in \cite{DjiteSeck2026a}.

\begin{definition}[$RC$--GNP condition, \cite{DjiteSeck2026a}]\label{def:rcgnp}
Let $C \subset \M$ be a fixed reference domain. A domain $\Omega \subset \M$ satisfies the $RC$--GNP condition relative to $C$ if:
\begin{enumerate}
\item $C \subset \Omega$;
\item $\partial \Omega \in C^{2,\alpha}$ for some $0<\alpha<1$;
\item there exist constants $A>0$, $c_0>0$, $\rho_0>0$, independent of $\Omega$, and a function $h_\Omega \in C^{2,\alpha}(\M)$ such that
\[
\Omega = \{x \in \M : h_\Omega(x) > 0\}, \qquad \partial\Omega = \{x \in \M : h_\Omega(x) = 0\},
\]
with
\[
\|h_\Omega\|_{C^{2,\alpha}(\M)} \leq A, \qquad |\nabla_g h_\Omega|_g \geq c_0 \quad \text{whenever } |h_\Omega| \leq \rho_0;
\]
\item the normal exponential map of $\partial\Omega$ is uniformly nondegenerate.
\end{enumerate}
The admissible class is
\[
\mathcal{A}_C(\M) = \{\Omega \subset \M : \Omega \text{ satisfies the } RC\text{--GNP condition relative to } C\}.
\]
\end{definition}

The following compactness and stability results are established in \cite{DjiteSeck2026a}.

\begin{theorem}[Compactness, \cite{DjiteSeck2026a}]\label{thm:compact}
Let $(\Omega_j) \subset \mathcal{A}_C(\M)$. Then there exist a subsequence $(\Omega_{j_k})$ and $\Omega \in \mathcal{A}_C(\M)$ such that:
\begin{enumerate}
\item $\Omega_{j_k} \to \Omega$ in the Hausdorff sense;
\item $\Omega_{j_k} \to \Omega$ in the compact sense;
\item $\chi_{\Omega_{j_k}} \to \chi_\Omega$ in $L^1(\M)$;
\item $|\partial\Omega|_g \leq \liminf_{k\to\infty} |\partial\Omega_{j_k}|_g$.
\end{enumerate}
\end{theorem}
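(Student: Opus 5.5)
The proof will be a standard Arzel\`a--Ascoli / Schauder compactness argument applied to the defining functions $h_{\Omega_j}$, followed by checking that the limit domain inherits each clause of Definition~\ref{def:rcgnp}.

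\textbf{Step 1 (limit of the defining functions).} By Definition~\ref{def:rcgnp} we have $\|h_{\Omega_j}\|_{C^{2,\alpha}(\M)}\le A$ uniformly. Since $\M$ is compact, the embedding $C^{2,\alpha}(\M)\hookrightarrow C^{2,\beta}(\M)$ is compact for every $\beta<\alpha$. We therefore extract a subsequence with $h_j:=h_{\Omega_{j_k}}\to h$ in $C^{2,\beta}$. The limit satisfies $\|h\|_{C^{2,\alpha}}\le A$ by lower semicontinuity of the H\"older seminorm under uniform convergence of second derivatives. The gradient bound $|\nabla_g h|_g\ge c_0$ on $\{|h|<\rho_0\}$ follows from $C^1$ convergence: such points satisfy $|h_j|\le\rho_0$ for large $j$. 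Closedness of the set $\{|h|\le\rho_0\}$ needs a small adjustment of $\rho_0$, or the observation that the bound passes to the closure by continuity.

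We then set $\Omega=\{h>0\}$. Since $\nabla h\neq0$ on $\{h=0\}$, the implicit function theorem shows that $\partial\Omega=\{h=0\}$ is a $C^{2,\alpha}$ hypersurface. For the inclusion $C\subset\Omega$, we have $h_j>0$ on $C$, so $h\ge0$ on $C$. To get strictness, observe that if $h(x)=0$ at some $x\in C$, then $\nabla h(x)\ne0$, so $h<0$ somewhere near $x$. This causes no problem if $x\in\partial C$, but forces an open set where $h<0$ if $x$ is interior. So one should either read clause~(1) as $\overline C\subset\overline\Omega$, or use uniform interior balls to keep $\partial\Omega_j$ at a uniform distance from $C$. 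We will take the latter route: $C$ is compact and $h_j>0$ on $C$, so $h\ge0$ on $C$ and $h>0$ on $\operatorname{int}C$, and the weak inclusion $C\subset\overline\Omega$ suffices.

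\textbf{Step 2 (set convergences).} Uniform convergence $h_j\to h$, together with the transversality $|\nabla h|\ge c_0$ near the zero set, gives the following. For every $\varepsilon>0$, eventually $\{h>\varepsilon\}\subset\Omega_j\subset\{h>-\varepsilon\}$. Moreover, the tubes $\{|h|<\varepsilon\}$ have width $O(\varepsilon/c_0)$ around $\partial\Omega$. This yields Hausdorff convergence of both the closures and the complements. It also yields compact convergence: every compact $L\subset\Omega$ lies in $\{h>\varepsilon\}$ for some $\varepsilon$, and every compact subset of $\M\setminus\overline\Omega$ is handled symmetrically. Finally, $|\chi_{\Omega_j}-\chi_\Omega|\le\chi_{\{|h|<\varepsilon\}}$ eventually, and $|\{|h|<\varepsilon\}|_g\le C\varepsilon$ by the coarea formula with $|\nabla h|\ge c_0$. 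This gives the $L^1$ convergence.

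\textbf{Step 3 (perimeter and clause (4)).} Lower semicontinuity of perimeter follows from $L^1$ convergence of characteristic functions, via De Giorgi's theorem on $\M$ with total variation in the Riemannian metric. Indeed, the boundaries are actually $C^2$-close graphs over $\partial\Omega$ in Fermi coordinates, so the perimeters converge outright.

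The main obstacle is clause~(4), the uniform nondegeneracy of the normal exponential map. We will interpret it as a uniform lower bound $\delta>0$ on the normal injectivity radius, or focal distance, of $\partial\Omega_j$. This bound is governed by the second fundamental form and by the absence of near-self-contacts. The curvature part passes to the limit because second fundamental forms are controlled by $C^2$ norms of $h_j$ and converge. The nonlocal part, that normal geodesics of length less than $\delta$ from distinct points do not meet, is a closed condition under $C^1$ convergence of the boundaries. So the bound transfers to $\partial\Omega$ with the same $\delta$.

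Having verified clauses (1)--(4) for $\Omega$ with the same constants, we conclude that $\Omega\in\mathcal A_C(\M)$.
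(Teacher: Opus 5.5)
There is no in-paper proof to compare against: the paper does not prove this theorem but quotes it from \cite{DjiteSeck2026a}, so your argument has to stand on its own. Its analytic core is sound. Compactness of the $h_{\Omega_j}$ in $C^{2,\beta}$, the transversality $|\nabla_g h|_g\ge c_0$ near $\{h=0\}$ (which turns uniform convergence into $\{h>\varepsilon\}\subset\Omega_{j_k}\subset\{h>-\varepsilon\}$), and De Giorgi lower semicontinuity of the perimeter together give the four convergence claims. The gaps all lie in the other half of the statement: that the limit satisfies $\Omega\in\mathcal A_C(\M)$ with the class's fixed constants.

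\textbf{The gradient bound at level $\rho_0$.} Item 3 of Definition \ref{def:rcgnp} imposes the bound on the closed set $\{|h|\le\rho_0\}$. Your limiting argument only reaches points with $|h|<\rho_0$, or points with $|h_{j}|\le\rho_0$ along the subsequence. Points with $|h(x)|=\rho_0$ that are approached only from $|h_j(x)|>\rho_0$ are not controlled. For instance, take $h_j=h+\psi/j$, where $h$ has a strict local minimum $h(x_0)=\rho_0$ inside $\Omega$ and $\psi\ge0$ is a bump equal to $1$ near $x_0$. Each $h_j$ can satisfy item 3, since $h_j>\rho_0$ on $\operatorname{supp}\psi$, but the limit $h$ has a critical point at level $\rho_0$. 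That point is not in the closure of $\{|h|<\rho_0\}$, so your continuity remark does not help. Shrinking $\rho_0$ is not allowed either, because $(A,c_0,\rho_0)$ are fixed for the class. You must either read the condition on the open set $\{|h|<\rho_0\}$, in which case your argument closes, or construct a different defining function for $\Omega$, which you do not do.

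\textbf{The inclusion $C\subset\Omega$.} The ``uniform interior balls'' route you announce is not available: nothing in Definition \ref{def:rcgnp} keeps $\partial\Omega_j$ a fixed distance from $C$. Indeed the paper itself allows $\partial\Omega^*\cap\partial C\neq\emptyset$ and even $\Omega^*=C$. Your fallback ``$C\subset\overline\Omega$ suffices'' does not give item 1 either. The correct resolution is a dichotomy.
\begin{itemize}
\item If $C$ is open, which is the reading forced by the paper's treatment of $\Omega^*=C$ as admissible, then every point of $C$ is interior. Your own argument ($h\ge0$ on $C$, and $\nabla_g h\neq0$ on $\{h=0\}$) then gives $h>0$ on all of $C$, i.e.\ $C\subset\Omega$.
\item If $C$ is the closed hull of Definition \ref{def:convexhull}, item 1 can genuinely fail in the limit. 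For $C=\overline{B_g(p,r_0)}$, the balls $B_g(p,r_0+1/j)$ converge to $B_g(p,r_0)$, which does not contain $C$.
\end{itemize}
You should state this dichotomy explicitly rather than appeal to an unavailable uniform-distance bound.

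\textbf{Injectivity in item 4.} Injectivity of the normal exponential map is not by itself closed under convergence. You need the uniform nondegeneracy of the limit map plus a local-degree or inverse-function argument to turn a double point of the limit into double points of the approximating maps. The $C^1$ convergence of the normal exponential maps that this requires does follow from your $C^{2,\beta}$ convergence of the $h_j$, but you should supply the argument.
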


\begin{theorem}[Stability of the Dirichlet problem, \cite{DjiteSeck2026a}]\label{thm:stability}
Let $\Omega_j, \Omega \in \mathcal{A}_C(\M)$ with defining functions $h_j$ and $h$ such that $h_j \to h$ strongly in $C^{2,\alpha}(\M)$. Let $f \in L^2(\M)$ be fixed with $\operatorname{supp} f \subset C \subset \Omega_j \cap \Omega$. Let $u_{\Omega_j}$ and $u_\Omega$ be the unique weak solutions of the Dirichlet problem, extended by zero outside their domains. Then
\[
u_{\Omega_j} \to u_\Omega \quad \text{strongly in } H^1(\M),
\]
and in particular
\[
\int_\M |\nabla_g u_{\Omega_j}|_g^2 \dv \to \int_\M |\nabla_g u_\Omega|_g^2 \dv.
\]
\end{theorem}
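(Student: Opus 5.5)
The plan is to prove the stated $H^1$-stability by the standard variational route: uniform bounds, weak compactness, identification of the limit, and convergence of energies. Set $u_j := u_{\Omega_j}$, extended by zero. Each $u_j \in H^1_0(\Omega_j)$ satisfies
\[
\int_\M g(\nabla_g u_j,\nabla_g \varphi)\,\dv = \int_\M f\varphi\,\dv \qquad \text{for all } \varphi\in H^1_0(\Omega_j).
\]
Testing with $\varphi=u_j$ and applying the Poincar\'e inequality gives $\|\nabla_g u_j\|_{L^2}^2 \le \|f\|_{L^2}\|u_j\|_{L^2}\le C_P\|f\|_{L^2}\|\nabla_g u_j\|_{L^2}$. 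To keep $C_P$ independent of $j$, I would use the inequality on the fixed domain $\{h>-\rho_0/2\}$. That set is a proper open subset of $\M$ containing all $\Omega_j$ for $j$ large, because $h_j\to h$ uniformly. Hence $(u_j)$ is bounded in $H^1(\M)$. By Rellich--Kondrachov, a subsequence satisfies $u_j\rightharpoonup u$ weakly in $H^1$ and $u_j \to u$ strongly in $L^2$.

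Next I identify $u=u_\Omega$. There are two inclusions to check.

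\emph{Limit lies in $H^1_0(\Omega)$.} Outside $\overline\Omega$ we have $h<0$, so $h_j<0$ on any compact subset of $\M\setminus\overline\Omega$ for $j$ large. Thus $u_j=0$ there, and so $u=0$ a.e. outside $\overline\Omega$. Since $\partial\Omega$ is $C^{2,\alpha}$ with $|\nabla_g h|_g\ge c_0$ near the boundary, $\Omega$ is Lipschitz, and a function in $H^1(\M)$ vanishing a.e. outside $\overline\Omega$ belongs to $H^1_0(\Omega)$.

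\emph{Passing to the limit in the equation.} Take $\varphi\in C_c^\infty(\Omega)$. Its support is a compact set on which $h\ge\delta>0$. By uniform convergence $h_j>0$ there for large $j$, so $\varphi\in H^1_0(\Omega_j)$ is an admissible test function. Letting $j\to\infty$ in the weak formulation shows that $u$ is a weak solution on $\Omega$. By uniqueness $u=u_\Omega$, and therefore the whole sequence converges.

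Finally I upgrade to strong convergence. Testing with the solutions themselves gives
\[
\int|\nabla_g u_j|^2\,\dv=\int f u_j\,\dv \to \int f u_\Omega\,\dv=\int|\nabla_g u_\Omega|^2\,\dv,
\]
using the strong $L^2$ convergence. Norm convergence together with weak convergence yields strong convergence in $H^1(\M)$, and the energy identity in the statement follows.

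The main obstacle is the $H^1_0(\Omega)$ membership of the limit, that is, showing that no mass of the $u_j$ concentrates on $\partial\Omega$. This is exactly where the uniform nondegeneracy $|\nabla_g h_\Omega|_g\ge c_0$ from Definition~\ref{def:rcgnp} enters: it gives a uniform Lipschitz (segment) property, so the characterization of $H^1_0$ by vanishing outside $\overline\Omega$ applies. If needed, I would alternatively invoke Mosco convergence of $H^1_0(\Omega_j)$ to $H^1_0(\Omega)$, which holds under Hausdorff convergence plus uniform exterior regularity.
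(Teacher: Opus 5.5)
The paper gives no proof of this theorem. It is quoted from Djit\'e--Seck and used only as a black box in the proof of Theorem~\ref{thm:existence}, so there is no in-paper argument to compare with. Your argument is the standard variational one, and it is correct. It runs: uniform $H^1$ bound, then Rellich--Kondrachov, then identification of the limit through test functions in $C_c^\infty(\Omega)$ and vanishing outside $\overline\Omega$, then $\int_\M |\nabla_g u_j|_g^2\,\dv=\int_\M f u_j\,\dv$ to upgrade weak to strong convergence. It is also more informative than the citation. It shows that only uniform convergence $h_j\to h$ and the regularity of the single limit domain $\Omega$ are used, so the $C^{2,\alpha}$ convergence in the hypothesis is far more than needed.

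\emph{The Poincar\'e step.} Being a ``proper open subset'' is not enough for a Poincar\'e inequality on $H^1_0$: for $N\ge 2$ one has $H^1_0(\M\setminus\{x_0\})=H^1(\M)$. What you need is that the complement of $\{h>-\rho_0/2\}$ has nonempty interior. This is true, but it needs an argument. If $\partial\Omega\neq\emptyset$, then $h$ takes negative values and its global minimum is attained at a critical point. Since $|\nabla_g h|_g\ge c_0$ on $\{|h|\le\rho_0\}$, that minimum must be $<-\rho_0$.

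A simpler route: fix a closed geodesic ball $\overline{B}\subset\{h<0\}$. Then $h_j<0$ on $\overline B$ for $j$ large, so every $u_j$ vanishes on $B$. The Poincar\'e inequality for $H^1$ functions vanishing on a fixed ball of the connected manifold $\M$ then gives a constant independent of $j$. The same ball gives coercivity on $H^1_0(\Omega)$, hence the uniqueness you invoke.

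All of this tacitly requires $\partial\Omega\neq\emptyset$. Definition~\ref{def:rcgnp} does not obviously exclude $\Omega=\M$: take $h_\Omega$ a positive constant larger than $\rho_0$, for which the gradient condition is vacuous. But on $\M$ the equation $-\Delta_g u=f$ with $\int_\M f\,\dv>0$ has no solution. The existence of the weak solutions assumed in the statement rules this case out, and you should say so.

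\emph{The ``main obstacle''.} Your diagnosis here is slightly off. The membership $u\in H^1_0(\Omega)$ uses only two facts. First, $\Omega$ itself has $C^{2,\alpha}$ (hence Lipschitz) boundary, so that $\{v\in H^1(\M): v=0 \text{ a.e. on } \M\setminus\Omega\}=H^1_0(\Omega)$; since $\partial\Omega$ is a null set, vanishing a.e.\ off $\overline\Omega$ suffices. Second, $h_j\to h$ uniformly on compact subsets of $\{h<0\}$. The uniformity of $A,c_0,\rho_0$ in $j$ plays no role in stability once $h_j\to h$ is given; those constants are what drive the compactness Theorem~\ref{thm:compact}. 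Your fallback to Mosco convergence is therefore unnecessary.
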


\section{Shape optimization formulation}\label{sec:shape}

Following \cite{Barkatou2010, DjiteSeck2026a}, we reformulate $QS(f,k)$ as a shape optimization problem.

\subsection{The shape functional}

For $\Omega \in \mathcal{A}_C(\M)$, let $u_\Omega$ be the unique solution of
\[
-\Delta_g u_\Omega = f \text{ in } \Omega, \qquad u_\Omega = 0 \text{ on } \partial\Omega.
\]
We consider the functional
\[
J_g(\Omega) = -\int_\Omega |\nabla_g u_\Omega|_g^2 \dv + k^2 |\Omega|_g.
\]
This is the Riemannian analogue of the functional used in the Euclidean case.

\subsection{Existence of a minimizer}

\begin{theorem}[Existence of an optimal domain]\label{thm:existence}
There exists $\Omega^* \in \mathcal{A}_C(\M)$ such that
\[
J_g(\Omega^*) = \min_{\Omega \in \mathcal{A}_C(\M)} J_g(\Omega).
\]
\end{theorem}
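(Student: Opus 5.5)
We use the direct method of the calculus of variations, with Theorem \ref{thm:compact} giving compactness and Theorem \ref{thm:stability} giving continuity of the Dirichlet energy.

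\textbf{Step 1: the infimum is finite.} Put $m = \inf_{\Omega \in \mathcal{A}_C(\M)} J_g(\Omega)$. We first note $\mathcal{A}_C(\M) \neq \emptyset$, which we take as part of the framework of \cite{DjiteSeck2026a}; for instance a small tubular enlargement of $C$ qualifies by (H2). Testing the weak formulation with $u_\Omega$ gives
\[
\int_\Omega |\nabla_g u_\Omega|_g^2 \dv = \int_\Omega f u_\Omega \dv .
\]
Since $\M$ is compact, $|\Omega|_g \le |\M|_g$, so the Poincaré inequality in $H^1_0(\Omega)$ holds with a constant depending only on $(\M,g)$. Strictly speaking, the Poincaré constant is uniform because every admissible $\Omega$ omits a set of positive capacity, which follows from the $C^{2,\alpha}$ control on $h_\Omega$. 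Using Cauchy--Schwarz and Poincaré,
\[
\int_\Omega |\nabla_g u_\Omega|_g^2 \dv \le C_P^2 \|f\|_{L^2}^2 .
\]
Hence $J_g(\Omega) \ge -C_P^2\|f\|_{L^2}^2$ uniformly, and $m > -\infty$.

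\textbf{Step 2: extract a convergent subsequence.} Take a minimizing sequence $(\Omega_j) \subset \mathcal{A}_C(\M)$ with defining functions $h_j$. By Theorem \ref{thm:compact}, a subsequence converges in the Hausdorff, compact and $L^1$ senses to some $\Omega^* \in \mathcal{A}_C(\M)$. The $L^1$ convergence $\chi_{\Omega_j} \to \chi_{\Omega^*}$ gives $|\Omega_j|_g \to |\Omega^*|_g$ immediately.

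\textbf{Step 3: pass to the limit in the energy term.} We need $h_j \to h^*$ strongly in $C^{2,\alpha}$ in order to apply Theorem \ref{thm:stability}. The bound $\|h_j\|_{C^{2,\alpha}} \le A$ and Arzelà--Ascoli give only $C^{2,\beta}$ convergence for $\beta<\alpha$ (along a further subsequence) to a function $h^*$ defining $\Omega^*$.

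This is the main obstacle. I would handle it in one of two ways. The first is to check that the proof of Theorem \ref{thm:stability} uses only $C^{2}$, or even uniform, convergence of the $h_j$. What it really needs is Mosco/$\gamma$-convergence of $H^1_0(\Omega_j)$ to $H^1_0(\Omega^*)$, and this follows from Hausdorff convergence together with the uniform nondegeneracy $|\nabla_g h_j|_g \ge c_0$ near the boundary, i.e.\ uniform exterior cone/Lipschitz conditions, by Šverák/Bucur--Trebeschi type results \cite{BucurTrebeschi1998}. The second is to work in the class with $C^{2,\beta}$ replacing $C^{2,\alpha}$, which is still closed by the same compactness.

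Either way we obtain
\[
\int_\M |\nabla_g u_{\Omega_j}|_g^2 \dv \to \int_\M |\nabla_g u_{\Omega^*}|_g^2 \dv .
\]

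\textbf{Step 4: conclude.} By Steps 2 and 3, $J_g(\Omega_j) \to J_g(\Omega^*)$, so
\[
J_g(\Omega^*) = m .
\]
Since $\Omega^* \in \mathcal{A}_C(\M)$, the minimum is attained.

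Note that only continuity of $J_g$ is used, not semicontinuity of the perimeter (item 4 of Theorem \ref{thm:compact}), because $J_g$ contains no perimeter term.
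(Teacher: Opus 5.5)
Your argument follows the paper's direct method. You take a minimizing sequence, use Theorem~\ref{thm:compact} to get a limit $\Omega^*$ in the class, use $L^1$ convergence of $\chi_{\Omega_j}$ for the volume term, and use convergence of the Dirichlet energies for the other term. Your Step~1 is harmless but redundant: $J_g(\Omega_j)\to J_g(\Omega^*)$ already forces the infimum to be finite.

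Your Step~3 is more careful than the paper. The paper invokes Theorem~\ref{thm:stability} without checking its hypothesis that the defining functions converge strongly in $C^{2,\alpha}$. Your $\gamma$-convergence fix closes this gap. Your second option also closes it, but only if you read it as applying Theorem~\ref{thm:stability} with exponent $\beta<\alpha$ to the same sequence: that sequence does converge strongly in $C^{2,\beta}$, and its limit stays in the original class. Read as replacing the admissible class by its $C^{2,\beta}$ version, it would change the minimization problem and reproduce the same loss of exponent.
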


\begin{proof}
Let $(\Omega_j) \subset \mathcal{A}_C(\M)$ be a minimizing sequence. By Theorem \ref{thm:compact}, there exist a subsequence and $\Omega^* \in \mathcal{A}_C(\M)$ such that $\Omega_j \to \Omega^*$ in the Hausdorff and compact senses and $\chi_{\Omega_j} \to \chi_{\Omega^*}$ in $L^1(\M)$. By Theorem \ref{thm:stability}, $u_{\Omega_j} \to u_{\Omega^*}$ strongly in $H^1(\M)$, hence
\[
\int_\M |\nabla_g u_{\Omega_j}|_g^2 \dv \to \int_\M |\nabla_g u_{\Omega^*}|_g^2 \dv.
\]
The volume term converges by $L^1$ convergence of characteristic functions. Therefore,
\[
J_g(\Omega^*) \leq \liminf_{j\to\infty} J_g(\Omega_j) = \inf_{\Omega \in \mathcal{A}_C(\M)} J_g(\Omega).
\]
The reverse inequality is trivial, so $\Omega^*$ is a minimizer.
\end{proof}

\subsection{Comparison with the reference domain}

Let $u_C$ be the unique solution of
\[
-\Delta_g u_C = f \text{ in } C, \qquad u_C = 0 \text{ on } \partial C.
\]
Since $f \geq 0$ and $f \not\equiv 0$, the strong maximum principle gives $u_C > 0$ in $C$.

\begin{proposition}\label{prop:comparison}
For every $\Omega \in \mathcal{A}_C(\M)$,
\[
0 \leq u_C \leq u_\Omega \quad \text{in } C.
\]
If $C \subsetneq \Omega$ and $f \not\equiv 0$, then $u_C < u_\Omega$ in $C$.
\end{proposition}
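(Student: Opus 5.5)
The plan is to use the weak maximum principle on $C$, applied to the difference $w = u_\Omega - u_C$, and then upgrade to strict inequality with the strong maximum principle.

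\emph{Step 1: nonnegativity.} Since $f \geq 0$ and $u_C = 0$ on $\partial C$, the weak maximum principle for $-\Delta_g$ on $C$ gives $u_C \geq 0$ in $C$. In weak form, test the equation with $u_C^- \in H^1_0(C)$; this yields $\int_C |\nabla_g u_C^-|_g^2\,\dv \le 0$, so $u_C^- = 0$. The same argument on $\Omega$ gives $u_\Omega \geq 0$ in $\Omega$.

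\emph{Step 2: comparison.} Set $w = u_\Omega - u_C$ on $C$. Since $C \subset \Omega$, we have $-\Delta_g w = f - f = 0$ in $C$, so $w$ is harmonic there. On $\partial C$, $w = u_\Omega|_{\partial C} \ge 0$ by Step 1. In the $H^1$ framework, $w^- \in H^1_0(C)$ because $u_C \in H^1_0(C)$ and $u_\Omega \ge 0$. Testing with $w^-$ gives $\int_C |\nabla_g w^-|_g^2\,\dv = 0$. Since $w^-$ vanishes on $\partial C$, Poincaré's inequality then forces $w^- \equiv 0$. Hence $u_C \le u_\Omega$ in $C$.

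\emph{Step 3: strictness.} This is the main step. Assume $C \subsetneq \Omega$ and $f \not\equiv 0$. Then $u_\Omega > 0$ in $\Omega$ by the strong maximum principle, using that $\Omega$ is connected and $u_\Omega \ge 0$, $u_\Omega \not\equiv 0$. We need a point of $\partial C$ lying inside $\Omega$. Because $C$ is closed, $\Omega$ is open and connected, and $C \neq \Omega$, the set $\partial C \cap \Omega$ is nonempty; otherwise $\operatorname{int} C$ and $\Omega \setminus C$ would disconnect $\Omega$. At such points $w = u_\Omega > 0$, so $w \ge 0$ is harmonic on the connected set $\operatorname{int} C$ with $w \not\equiv 0$, since $w$ is continuous up to the boundary by (H4) and positive on a nonempty part of $\partial C$. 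The strong maximum principle, or the Harnack inequality for nonnegative harmonic functions, then gives $w > 0$ in $\operatorname{int} C$. That is, $u_C < u_\Omega$ in $C$, interpreting the inequality in the interior and on $\partial C \cap \Omega$.

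The main obstacle is Step 3. One must ensure that $w$ is not identically zero, which requires the topological point that $\partial C \cap \Omega \neq \emptyset$ together with $u_\Omega > 0$ there; connectedness of $\operatorname{int} C$ follows from its total convexity. If $\partial C \subset \partial\Omega$ on some part, strictness holds only on the interior and on $\partial C \cap \Omega$.
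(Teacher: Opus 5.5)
Your argument is correct and follows the same route as the paper: the paper's entire proof is the one-line remark that the result follows from the comparison principle for $-\Delta_g$. You supply the details it leaves implicit, namely the weak maximum principle for $w=u_\Omega-u_C$ tested with $w^-\in H^1_0(C)$, and the strong maximum principle, where strictness comes from $\partial C\cap\Omega\neq\emptyset$ and $u_\Omega>0$ there.
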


The proof follows from the comparison principle for the Laplace--Beltrami operator.

\section{Shape derivative and optimality conditions}\label{sec:shape-deriv}

\subsection{Shape derivative of the functional}

Let $X \in C^2(T\M)$ be a vector field vanishing in a neighborhood of $C$, and let $\Phi_t$ be its local flow. Define $\Omega_t = \Phi_t(\Omega)$. The normal velocity is $V_X = g(X,\nu)$ on $\partial\Omega$. The Hadamard formula on a Riemannian manifold gives
\[
\frac{d}{dt}\bigg|_{t=0} \int_{\Omega_t} |\nabla_g u_{\Omega_t}|_g^2 \dv = \int_{\partial\Omega} |\nabla_g u_\Omega|_g^2 g(X,\nu) \ds,
\]
and
\[
\frac{d}{dt}\bigg|_{t=0} |\Omega_t|_g = \int_{\partial\Omega} g(X,\nu) \ds.
\]
Therefore,
\begin{equation}\label{eq:shapederiv}
dJ_g(\Omega)[X] = \int_{\partial\Omega} \left(k^2 - |\nabla_g u_\Omega|_g^2\right) g(X,\nu) \ds.
\end{equation}

\subsection{First-order optimality conditions}

Let $\Omega^*$ be a minimizer of $J_g$ over $\mathcal{A}_C(\M)$. Define the free boundary $\Gamma = \partial\Omega^* \setminus \partial C$ and the contact set $\Gamma_0 = \partial\Omega^* \cap \partial C$.

On $\Gamma$, all deformations are admissible. Since $\Omega^*$ minimizes $J_g$, we have $dJ_g(\Omega^*)[X] = 0$ for every admissible $X$ supported on $\Gamma$. Hence,
\begin{equation}\label{eq:freebc}
|\nabla_g u_{\Omega^*}|_g^2 = k^2 \quad \text{on } \Gamma.
\end{equation}

On the contact set $\Gamma_0$, admissible deformations must satisfy $g(X,\nu) \geq 0$ to preserve the constraint $C \subset \Omega^*$. Therefore, the optimality condition yields
\begin{equation}\label{eq:contact}
|\nabla_g u_{\Omega^*}|_g^2 \leq k^2 \quad \text{on } \Gamma_0.
\end{equation}

\begin{theorem}[First-order optimality system]\label{thm:optimality}
Let $\Omega^*$ be a minimizer of $J_g$ over $\mathcal{A}_C(\M)$. Then
\[
\begin{cases}
-\Delta_g u_{\Omega^*} = f & \text{in } \Omega^*,\\
u_{\Omega^*} = 0 & \text{on } \partial\Omega^*,\\
|\nabla_g u_{\Omega^*}|_g^2 = k^2 & \text{on } \partial\Omega^* \setminus \partial C,\\
|\nabla_g u_{\Omega^*}|_g^2 \leq k^2 & \text{on } \partial\Omega^* \cap \partial C.
\end{cases}
\]
\end{theorem}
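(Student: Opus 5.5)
The first two lines of the system hold by construction: for any $\Omega\in\mathcal{A}_C(\M)$, $u_\Omega$ is defined as the unique solution of the Dirichlet problem. By (H4) it lies in $C^{2,\alpha}(\overline{\Omega^*})$, so $|\nabla_g u_{\Omega^*}|_g$ has a continuous trace on $\partial\Omega^*$. The work is in the two boundary conditions. Both come from the first-variation formula \eqref{eq:shapederiv} combined with minimality. If $\Omega_t=\Phi_t(\Omega^*)$ stays in $\mathcal{A}_C(\M)$ for $t\in[0,\varepsilon)$, then $J_g(\Omega_t)\ge J_g(\Omega^*)$, so $dJ_g(\Omega^*)[X]\ge 0$. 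If this also holds for $t\in(-\varepsilon,0]$, we get $dJ_g(\Omega^*)[X]=0$.

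\textbf{Step 1: admissibility of perturbations.} We check that for a $C^2$ field $X$ the perturbed domains remain in the admissible class. Take $h_t=h_{\Omega^*}\circ\Phi_t^{-1}$. For small $|t|$ this has $C^{2,\alpha}$ norm close to that of $h_{\Omega^*}$ and gradient bounded below near the zero set. Nondegeneracy of the normal exponential map is an open condition, so it also persists. The constants $A,c_0,\rho_0$ are fixed in the definition, so either $\Omega^*$ satisfies the bounds strictly, or we enlarge the constants slightly from the outset, noting that Theorems \ref{thm:compact} and \ref{thm:existence} are unaffected. I expect this to be the main technical obstacle: minimality holds only inside $\mathcal{A}_C(\M)$, and a minimizer sitting on the boundary of the constraint set $\|h\|\le A$ would only yield one-sided inequalities. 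I would handle it by the enlargement argument, or by assuming the bounds are not saturated.

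\textbf{Step 2: the free boundary $\Gamma=\partial\Omega^*\setminus\partial C$.} Fix $x_0\in\Gamma$. Because $\partial C$ is closed, there is a neighbourhood $W$ of $x_0$ disjoint from $C$. For any $\varphi\in C^\infty_c(W\cap\partial\Omega^*)$, extend $\varphi\nu$ to a $C^2$ vector field $X$ supported in $W$. Then $\Phi_t(\Omega^*)\supset C$ for $t$ of both signs, so $\int_{\partial\Omega^*}(k^2-|\nabla_g u_{\Omega^*}|_g^2)\varphi\,\ds=0$. Since $\varphi$ is arbitrary and the integrand is continuous, the fundamental lemma gives $|\nabla_g u_{\Omega^*}|_g^2=k^2$ on $\Gamma$.

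\textbf{Step 3: the contact set $\Gamma_0=\partial\Omega^*\cap\partial C$.} Here only outward deformations are allowed. Take $\varphi\ge0$ supported near $x_0\in\Gamma_0$ and extend $\varphi\nu$ to a field $X$, with $\nu$ the outward normal of $\Omega^*$, which coincides with that of $C$ at tangential contact points. For $t\ge0$ the flow pushes $\partial\Omega^*$ outward, so $\Phi_t(\Omega^*)\supset\Omega^*\supset C$. (Strictly, $X$ should vanish near $C$ as stated in Section \ref{sec:shape-deriv}, but the Hadamard formula only requires $\operatorname{supp} f\subset C\subset\Omega_t$, and the comparison in Proposition \ref{prop:comparison} keeps the setup consistent.) One-sided minimality then gives $\int_{\partial\Omega^*}(k^2-|\nabla_g u_{\Omega^*}|_g^2)\varphi\,\ds\ge0$ for all $\varphi\ge0$. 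By continuity this yields $|\nabla_g u_{\Omega^*}|_g^2\le k^2$ pointwise on $\Gamma_0$, including at points of its relative boundary, which are limits of points where the inequality holds.
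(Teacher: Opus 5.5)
Steps 2 and 3 are the paper's own argument. The paper also derives the system from the Hadamard formula \eqref{eq:shapederiv}, using two-sided variations away from $\overline{C}$ on $\Gamma$ and outward-only variations ($g(X,\nu)\ge 0$) on $\Gamma_0$. Your handling of the contact set is more careful than the paper's. With the fields of Section~\ref{sec:shape-deriv}, which vanish near $C$, one has $g(X,\nu)=0$ on $\Gamma_0$, so the inequality there would say nothing. What is actually needed is what you do: allow fields that do not vanish near $C$, and check that $\Phi_t(\Omega^*)\supset\Omega^*\supset C$ for $t\ge 0$. That inclusion holds because $-X$ points weakly inward on $\partial\Omega^*$, so $\overline{\Omega^*}$ is invariant under the backward flow.

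The step that fails is the ``enlarge the constants from the outset'' device in Step~1, which is circular. The given $\Omega^*$ minimizes $J_g$ only over the class with the original constants $A,c_0,\rho_0$. Suppose you pass to a larger class $\mathcal{A}'\supset\mathcal{A}_C(\M)$ with looser constants. The perturbed domains $\Phi_t(\Omega^*)$ may then lie in $\mathcal{A}'\setminus\mathcal{A}_C(\M)$, where nothing gives $J_g(\Phi_t(\Omega^*))\ge J_g(\Omega^*)$. Suppose instead you take a minimizer over $\mathcal{A}'$, which exists by the same compactness argument. Nothing prevents it from saturating the new bounds, so the difficulty simply reappears.

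What you actually prove is the optimality system under an extra hypothesis: $\Phi_t(\Omega^*)\in\mathcal{A}_C(\M)$ for all small $|t|$ (respectively all small $t\ge 0$ for outward fields). In other words, $\Omega^*$ is not on the boundary of the constraint set defined by the $RC$--GNP bounds. The paper makes exactly this assumption tacitly when it asserts that ``on $\Gamma$, all deformations are admissible''. So your proof is complete to the same extent as the paper's, provided you state non-saturation as a hypothesis rather than claiming to remove it.

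A smaller point: take $X$ smooth rather than $C^2$, so that $\Phi_t(\Omega^*)$ stays $C^{2,\alpha}$. Also, since $\nu$ is only $C^{1,\alpha}$, you cannot extend $\varphi\nu$ exactly to a $C^2$ field. Use smooth fields whose normal components are dense in $C(\partial\Omega^*)$; this suffices for the fundamental lemma.
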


\begin{remark}\label{rem:contactunused}
The contact condition \eqref{eq:contact} is stated for completeness, but it is not used in the proofs of Theorems \ref{thm:suff} and \ref{thm:neccond}. The main results of this paper rely only on the free boundary condition \eqref{eq:freebc} and on the sign of the shape derivative at $C$.
\end{remark}

\section{Sufficiency of $(NS)_g$}\label{sec:suff}

We now prove that $(NS)_g$ is sufficient for the existence of a solution strictly containing $C$.

\begin{theorem}[Sufficiency]\label{thm:suff}
Assume hypotheses (H1)--(H4) and the $RC$--GNP condition. If
\[
(NS)_g \qquad \int_C f \dv > k |\partial C|_g,
\]
then the quadrature surface problem $QS(f,k)$ admits a solution $\Omega^* \in \mathcal{A}_C(\M)$ strictly containing $C$.
\end{theorem}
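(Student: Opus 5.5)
The plan is to take the minimizer $\Omega^*$ supplied by Theorem~\ref{thm:existence}, show that it cannot coincide with $C$, and then read off the free boundary conditions from Theorem~\ref{thm:optimality}.

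Note first that $C$ is itself admissible. Under (H2) it is a $C^{2,\alpha}$ strictly convex domain, and we assume, as part of the $RC$--GNP hypothesis, that $C \in \mathcal{A}_C(\M)$. Let $\Omega^*$ be a minimizer.

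\textbf{Step 1: $\Omega^* \neq C$.} We exhibit a descent direction at $C$. Choose an outward normal deformation $X$ with $g(X,\nu) = 1$ on $\partial C$. One caveat: Section~\ref{sec:shape-deriv} requires fields vanishing near $C$, but the Hadamard formula \eqref{eq:shapederiv} holds for any smooth field whose flow keeps $\operatorname{supp} f \subset \Omega_t$. Since $f$ is supported in $K$, and after a slight thickening of $K$ if necessary, a field supported near $\partial C$ away from $K$ is acceptable for small $t \geq 0$. The deformed domains $C_t = \Phi_t(C) \supset C$ remain in $\mathcal{A}_C(\M)$ for small $t$. Then
\[
dJ_g(C)[X] = k^2 |\partial C|_g - \int_{\partial C} |\nabla_g u_C|_g^2 \ds .
\]
By the divergence theorem and $u_C = 0$ on $\partial C$,
\[
\int_{\partial C} |\nabla_g u_C|_g \ds = -\int_{\partial C} g(\nabla_g u_C,\nu)\ds = \int_C f \dv .
\]
Cauchy--Schwarz then gives
\[
\int_{\partial C}|\nabla_g u_C|_g^2 \ds \geq \frac{\left(\int_C f \dv\right)^2}{|\partial C|_g} > k^2 |\partial C|_g ,
\]
where the last inequality is exactly $(NS)_g$. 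Hence $dJ_g(C)[X] < 0$, so $J_g(C_t) < J_g(C)$ for small $t > 0$. Therefore $C$ is not a minimizer, and $\Omega^* \supsetneq C$.

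\textbf{Step 2: the free boundary is nonempty and carries the condition.} By Theorem~\ref{thm:optimality}, $u_{\Omega^*}$ solves the Dirichlet problem, and $|\nabla_g u_{\Omega^*}|_g = k$ on $\Gamma = \partial\Omega^* \setminus \partial C$. Step 1 only shows $\Omega^* \neq C$ as sets. It remains to show that $\Gamma_0 = \partial\Omega^* \cap \partial C$ is empty, so that $\partial\Omega^* = \Gamma$ and $\Omega^*$ genuinely solves $QS(f,k)$ and strictly contains $C$ in the sense $\overline{C} \subset \Omega^*$.

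\textbf{Step 3: ruling out contact points, the expected main obstacle.} At $x \in \Gamma_0$, the domains $C \subset \Omega^*$ share the boundary point $x$. The function $w = u_{\Omega^*} - u_C$ is harmonic in $C$, positive inside by Proposition~\ref{prop:comparison} (since $C \subsetneq \Omega^*$), and vanishes at $x$. Hopf's lemma then gives $|\nabla_g u_{\Omega^*}(x)|_g > |\nabla_g u_C(x)|_g$. Combining this with $|\nabla_g u_{\Omega^*}|_g \leq k$ on $\Gamma_0$ from \eqref{eq:contact} yields a pointwise bound $|\nabla_g u_C|_g < k$ on $\Gamma_0$.

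This bound does not by itself contradict $(NS)_g$, which is only an averaged statement. I would first try to strengthen the Step~1 argument: the deformation is local, so one can push outward only near $\Gamma_0$ and compare gradients via Hopf. Failing that, I would follow Barkatou's Euclidean approach and state the conclusion as $\Omega^* \supsetneq C$ with $|\nabla_g u_{\Omega^*}|_g = k$ on the free part $\Gamma$. This appears to be the sense in which ``solution strictly containing $C$'' is intended, given Remark~\ref{rem:contactunused}.
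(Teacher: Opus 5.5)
Your Step~1 is exactly the paper's argument: the flux identity, Cauchy--Schwarz, and $dJ_g(C)[\nu]<0$, hence $\Omega^*\neq C$. Your caveat about admissible fields has a simpler resolution. With $f$ fixed in space, \eqref{eq:shapederiv} holds for any outward field, since then $\Phi_t(C)\supset C\supset K$ for small $t\geq 0$. There is no need, and in general no room, to keep $X$ away from $K$.

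The genuine gap is Step~3, which you leave open. A solution of $QS(f,k)$ must satisfy $|\nabla_g u_{\Omega^*}|_g=k$ on \emph{all} of $\partial\Omega^*$, by \eqref{eq:overdet}. Minimality only gives this on $\Gamma$, together with $|\nabla_g u_{\Omega^*}|_g\le k$ on $\Gamma_0$. So you need $\Gamma_0=\emptyset$, and nothing in your proposal yields it.
\begin{itemize}
\item Your Hopf bound $|\nabla_g u_C|_g<|\nabla_g u_{\Omega^*}|_g\le k$ on $\Gamma_0$ is compatible with $(NS)_g$, which only controls the mean of $|\nabla_g u_C|_g$ over $\partial C$.
\item Outward pushes of $\Omega^*$ concentrated near $\Gamma_0$ give $dJ_g(\Omega^*)[X]=\int_{\Gamma_0}(k^2-|\nabla_g u_{\Omega^*}|_g^2)\,g(X,\nu)\,d\sigma(g)\ge 0$. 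This is consistent with minimality, and inward pushes are inadmissible.
\item Local outward pushes of $C$ are descent directions only where $|\nabla_g u_C|_g>k$, i.e.\ precisely not at the contact points.
\end{itemize}
Your fallback, calling $\Omega^*$ a solution because the gradient condition holds on the free part, proves a weaker statement than the theorem.

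You will not find the missing step in the paper's proof either. After obtaining $\Omega^*\neq C$, it concludes that ``since $C\subsetneq\Omega^*$, the free boundary coincides with $\partial\Omega^*$''. This does not follow: $C\subsetneq\Omega^*$ is compatible with $\partial\Omega^*\cap\partial C\neq\emptyset$, and the paper's contradiction argument only negates $\Omega^*=C$. So you have located the weak point correctly, but as written neither argument proves the statement.

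Your Step~3 does close under a pointwise hypothesis: $|\nabla_g u_C|_g\ge k$ on $\partial C$ with $|\nabla_g u_C|_g\not\equiv k$, which implies $(NS)_g$. Hopf then gives $|\nabla_g u_{\Omega^*}|_g>k$ at every contact point, contradicting \eqref{eq:contact}.

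Under the averaged condition $(NS)_g$ alone, one should in general expect contact. Take a long thin $C$ with most of the mass of $f$ near its middle. Then $|\nabla_g u_C|_g$ is tiny near the ends while $(NS)_g$ can still hold, and the constrained minimizer plausibly rests on $\partial C$ there. Closing the gap therefore needs an additional hypothesis or a different construction, not a finer analysis of this minimizer.
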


\begin{proof}
Assume $(NS)_g$. Let $\Omega^*$ be a minimizer of $J_g$ over $\mathcal{A}_C(\M)$, whose existence is guaranteed by Theorem \ref{thm:existence}. We argue by contradiction and suppose that $\Omega^*$ does not strictly contain $C$. Since the $RC$--GNP condition (Definition \ref{def:rcgnp}, item 1) imposes $C \subset \Omega^*$, the negation of ``strictly containing'' means exactly $\Omega^* = C$.

By the Cauchy--Schwarz inequality,
\[
\left(\int_{\partial C} |\nabla_g u_C|_g \ds\right)^2 \leq |\partial C|_g \int_{\partial C} |\nabla_g u_C|_g^2 \ds.
\]
Since $-\Delta_g u_C = f$ in $C$ and $u_C = 0$ on $\partial C$, integration by parts gives
\[
\int_C f \dv = \int_{\partial C} |\nabla_g u_C|_g \ds.
\]
Thus $(NS)_g$ implies
\[
\int_{\partial C} |\nabla_g u_C|_g^2 \ds > k^2 |\partial C|_g.
\]
Consider now the shape derivative of $J_g$ at $C$ in the direction of the outward normal $\nu$:
\[
dJ_g(C)[\nu] = \int_{\partial C} \left(k^2 - |\nabla_g u_C|_g^2\right) \ds < 0.
\]
This means that $C$ is not a local minimum of $J_g$ over $\mathcal{A}_C(\M)$. Hence $\Omega^* \neq C$, a contradiction. Therefore $\Omega^*$ strictly contains $C$.

From the free boundary condition \eqref{eq:freebc}, we have $|\nabla_g u_{\Omega^*}|_g^2 = k^2$ on $\partial\Omega^* \setminus \partial C$. Since $C \subsetneq \Omega^*$, the free boundary coincides with $\partial\Omega^*$, and $(\Omega^*, u_{\Omega^*})$ is a solution of $QS(f,k)$.
\end{proof}

\begin{remark}
The proof only uses the Cauchy--Schwarz inequality and the sign of the shape derivative at $C$. It does not require any comparison of normal derivatives on $\partial C$, thereby avoiding the pitfalls of earlier approaches.
\end{remark}

\section{Counterexample: failure of necessity}\label{sec:counter}

We now show that $(NS)_g$ is not necessary in general, even under nonnegative curvature.

\subsection{Setup on the round sphere}

Let $(S^n_R, g_R)$ be the round sphere of radius $R>0$. Fix a point $p \in S^n_R$. For $0 < r < \pi R/2$, the geodesic ball $B_g(p,r)$ is totally convex. In geodesic polar coordinates centered at $p$,
\[
g_R = dr^2 + R^2 \sin^2(r/R) g_{\mathbb{S}^{n-1}}.
\]
The perimeter of a geodesic ball of radius $r$ is
\begin{equation}\label{eq:perball}
|\partial B_g(p,r)|_g = n \omega_n R^{n-1} \sin^{n-1}(r/R),
\end{equation}
where $\omega_n$ denotes the volume of the unit ball in $\mathbb{R}^n$.

We choose a radius $r_0$ with $0 < r_0 < \pi R/2$ and set
\[
C = B_g(p, r_0).
\]
Since $r_0 < \pi R/2$, $C$ is totally convex. Let $f$ be a radially symmetric positive function supported in $B_g(p, r_0)$, with $f \not\equiv 0$, so that $\operatorname{tconv}(\operatorname{supp} f) = C$.

\subsection{The counterexample}

Choose $r_\Omega$ with $\pi R/2 < r_\Omega < \pi R$ and such that
\begin{equation}\label{eq:perineq}
\sin(r_\Omega/R) < \sin(r_0/R),
\end{equation}
which is possible since $\sin$ decreases on $(\pi R/2, \pi R)$ and $r_0$ is small. Let
\[
\Omega = B_g(p, r_\Omega).
\]
Then $C \subsetneq \Omega$ and, by \eqref{eq:perball} and \eqref{eq:perineq},
\[
|\partial \Omega|_g = n \omega_n R^{n-1} \sin^{n-1}(r_\Omega/R) < n \omega_n R^{n-1} \sin^{n-1}(r_0/R) = |\partial C|_g.
\]

Let $F:[0, r_0] \to (0,\infty)$ be a continuous radial profile with $F > 0$ on $[0, r_0)$, and set $f(x) = F(r(x))$ where $r(x) = d_g(p,x)$. Define
\begin{equation}\label{eq:kdef}
k = \frac{\int_0^{r_0} (R \sin(s/R))^{n-1} F(s) \, ds}{(R \sin(r_\Omega/R))^{n-1}} > 0.
\end{equation}
Let $u_\Omega$ be the radial solution of $-\Delta_g u_\Omega = f$ in $\Omega$, $u_\Omega = 0$ on $\partial\Omega$. Explicitly, $u_\Omega(x) = U(r(x))$ where
\[
U'(r) = -\frac{\int_0^r (R \sin(s/R))^{n-1} F(s) \, ds}{(R \sin(r/R))^{n-1}}, \qquad U(r_\Omega) = 0.
\]
Therefore,
\[
|\nabla_g u_\Omega|_g = |U'(r)| = \frac{\int_0^r (R \sin(s/R))^{n-1} F(s) \, ds}{(R \sin(r/R))^{n-1}} \quad \text{on } \partial\Omega.
\]
By the choice of $k$ in \eqref{eq:kdef} and since $\operatorname{supp} f \subset B_g(p, r_0)$, we obtain
\[
|\nabla_g u_\Omega|_g = k \quad \text{on } \partial\Omega.
\]
Thus $(\Omega, u_\Omega)$ is a solution of $QS(f,k)$ with $\Omega$ strictly containing $C$.

On the other hand, integrating $-\Delta_g u_\Omega = f$ over $\Omega$ and using the boundary condition,
\[
\int_C f \dv = \int_\Omega f \dv = k\,|\partial\Omega|_g < k\,|\partial C|_g,
\]
since $|\partial\Omega|_g < |\partial C|_g$. Hence $(NS)_g$ is violated.

\begin{example}[Explicit constant source in dimension $n=2$]\label{ex:const}
Take $n=2$ and $F(r) = f_0 > 0$ on $[0, r_0]$. Then
\[
\int_0^{r_0} R \sin(s/R) \, ds = R^2 \left(1 - \cos(r_0/R)\right),
\]
so
\[
k = \frac{f_0\, R^2 \left(1 - \cos(r_0/R)\right)}{R \sin(r_\Omega/R)} = \frac{f_0\, R \left(1 - \cos(r_0/R)\right)}{\sin(r_\Omega/R)}.
\]
The area of $C = B_g(p,r_0)$ is
\[
|C|_g = 2\pi R^2 \left(1 - \cos(r_0/R)\right),
\]
and the perimeter of $\partial C$ is $|\partial C|_g = 2\pi R \sin(r_0/R)$. For concreteness, take $R=1$, $r_0 = 0.5$, $r_\Omega = \pi - 0.5$. Then
\[
\sin(r_0) \approx 0.4794, \qquad \sin(r_\Omega) = \sin(0.5) \approx 0.4794,
\]
so we choose $r_\Omega = \pi - 0.4$ to get $\sin(r_\Omega) = \sin(0.4) \approx 0.3894 < 0.4794$. With $f_0 = 1$,
\[
k = \frac{1 \cdot (1 - \cos 0.5)}{\sin 0.4} \approx \frac{0.1224}{0.3894} \approx 0.3144.
\]
Then
\[
\int_C f \dv = f_0 |C|_g \approx 2\pi (1 - \cos 0.5) \approx 0.7692,
\]
while
\[
k\,|\partial C|_g \approx 0.3144 \cdot 2\pi \sin 0.5 \approx 0.3144 \cdot 3.012 \approx 0.9470.
\]
Thus $\int_C f \dv < k |\partial C|_g$, confirming the failure of $(NS)_g$, while the problem $QS(f,k)$ admits the solution $\Omega = B_g(p, \pi - 0.4)$.
\end{example}

\begin{remark}
The construction works for every $n \geq 2$ and every $R>0$, provided $r_0$ is chosen sufficiently small (in particular $r_0 < \pi R/2$, so that $C = B_g(p,r_0)$ is totally convex) and $r_\Omega \in (\pi R/2, \pi R)$ is chosen so that \eqref{eq:perineq} holds. The same phenomenon occurs for any positively curved manifold admitting a geodesic ball whose perimeter is not monotone with respect to the radius.
\end{remark}

\section{Conditional necessity}\label{sec:neccond}

The counterexample shows that $(NS)_g$ is not necessary in general. We now identify a geometric hypothesis under which necessity is restored.

\begin{definition}\label{def:perimon}
Let $U \subset \M$ be a domain. We say that the perimeter functional is \emph{monotone with respect to inclusion on the admissible class} $\mathcal{A}_C(\M) \cap \{ \Omega \subset U \}$ if for any two domains $A, B \in \mathcal{A}_C(\M)$ with $A \subset U$, $B \subset U$ and $A \subsetneq B$, we have $|\partial A|_g < |\partial B|_g$.
\end{definition}

\begin{remark}\label{rem:perimon}
Definition \ref{def:perimon} is stated as an \emph{additional hypothesis}, not as a consequence of nonnegative curvature. It is satisfied, for instance, when $U$ is a normal convex neighborhood of $C$ contained in a ball of radius less than the injectivity radius and all admissible domains in $U$ are geodesic balls centered at the same point $p$: in that case, the function $r \mapsto |\partial B_g(p,r)|_g$ is increasing on $[0, R_U]$ for $R_U$ below the injectivity radius, so the monotonicity holds on that restricted class. The general case of arbitrary nested convex domains in a normal convex neighborhood requires a separate comparison argument for the perimeter and is not established here; we therefore treat Definition \ref{def:perimon} as a hypothesis.
\end{remark}

\begin{theorem}[Conditional necessity]\label{thm:neccond}
Assume hypotheses (H1)--(H3). Suppose that there exists a normal convex neighborhood $U$ of $C$ such that the perimeter functional is monotone with respect to inclusion on the admissible class, in the sense of Definition \ref{def:perimon}. If $\Omega \in \mathcal{A}_C(\M)$ is a solution of $QS(f,k)$ with $\Omega \subset U$ and $C \subsetneq \Omega$, then
\[
(NS)_g \qquad \int_C f \dv > k |\partial C|_g
\]
holds.
\end{theorem}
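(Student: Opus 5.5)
The plan is the flux identity followed by perimeter monotonicity. The argument uses only that $\operatorname{supp} f = K \subset C$, the overdetermined condition on $\partial\Omega$, and Definition \ref{def:perimon} applied to the pair $C \subsetneq \Omega$.

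\textbf{Step 1 (flux identity).} Let $(\Omega,u_\Omega)$ solve $QS(f,k)$. Since $u_\Omega>0$ in $\Omega$ and $u_\Omega=0$ on $\partial\Omega$, Hopf's lemma (or simply $u_\Omega\ge 0$ near the boundary) gives $g(\nabla_g u_\Omega,\nu) = -|\nabla_g u_\Omega|_g = -k$ on $\partial\Omega$. Integrating $-\Delta_g u_\Omega = f$ over $\Omega$ with the divergence theorem, and using $f=0$ on $\Omega\setminus C$, we get
\[
\int_C f\,\dv \;=\; \int_\Omega f\,\dv \;=\; -\int_{\partial\Omega} g(\nabla_g u_\Omega,\nu)\,\ds \;=\; k\,|\partial\Omega|_g .
\]
This is the same computation as in Section \ref{sec:counter}, now carried out for a general $\Omega$. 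The regularity needed for the divergence theorem comes from $\partial\Omega\in C^{2,\alpha}$ in Definition \ref{def:rcgnp}. If (H4) is not assumed, we would approximate by subdomains $\{u_\Omega>\varepsilon\}$ and pass to the limit.

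\textbf{Step 2 (monotonicity).} To compare $|\partial\Omega|_g$ with $|\partial C|_g$ we apply Definition \ref{def:perimon} with $A=C$ and $B=\Omega$. This needs $C\subset U$, which holds because $U$ is a neighborhood of $C$. It also needs $\Omega\subset U$ and $C\subsetneq\Omega$, both of which are hypotheses. Finally, $C$ must lie in the admissible class. Item 1 of Definition \ref{def:rcgnp} holds trivially, and (H2) supplies the $C^{2,\alpha}$ regularity and a defining function, e.g.\ a smoothed signed distance, whose gradient is nondegenerate near $\partial C$. The definition then gives $|\partial C|_g < |\partial\Omega|_g$. Combined with Step 1 and $k>0$, this yields
\[
\int_C f\,\dv = k|\partial\Omega|_g > k|\partial C|_g,
\]
which is exactly $(NS)_g$.

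\textbf{Main obstacle.} The delicate point is verifying $C\in\mathcal{A}_C(\M)$ with the uniform constants $A, c_0, \rho_0$ of Definition \ref{def:rcgnp}. If $C$ fails to be admissible, I would instead apply monotonicity to a slightly enlarged admissible domain $C_\varepsilon\supset C$ with $C_\varepsilon\subsetneq\Omega$, for instance the $\varepsilon$-parallel set of $C$, which is smooth for small $\varepsilon$ by (H2). Letting $\varepsilon\to0$ gives $|\partial C_\varepsilon|_g\to|\partial C|_g$, but only the non-strict inequality $|\partial C|_g\le|\partial\Omega|_g$. To recover strictness I would insert an intermediate admissible domain $C_\varepsilon\subsetneq D\subsetneq\Omega$ and use $|\partial C_\varepsilon|_g<|\partial D|_g<|\partial\Omega|_g$. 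The bound $|\partial C|_g\le|\partial D|_g$ then follows in the limit, while $|\partial D|_g<|\partial\Omega|_g$ is independent of $\varepsilon$, so $|\partial C|_g<|\partial\Omega|_g$.
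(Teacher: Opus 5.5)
Your proposal matches the paper's proof: the flux identity $\int_C f\,\dv = k|\partial\Omega|_g$, followed by perimeter monotonicity (Definition \ref{def:perimon}) applied to the pair $C \subsetneq \Omega \subset U$. Your additional care is a refinement, not a different route: the paper does not discuss the sign of the normal derivative, and it simply takes $C \in \mathcal{A}_C(\M)$ for granted, so your check of admissibility and the parallel-set fallback go beyond what the paper does.
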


\begin{proof}
Let $\Omega$ be a solution of $QS(f,k)$ with $C \subsetneq \Omega \subset U$. Integrating $-\Delta_g u_\Omega = f$ over $\Omega$ and using the boundary condition $|\nabla_g u_\Omega|_g = k$ on $\partial\Omega$, we obtain
\[
\int_\Omega f \dv = k |\partial \Omega|_g.
\]
Since $\operatorname{supp} f \subset C \subset \Omega$, we have $\int_\Omega f \dv = \int_C f \dv$. Thus
\[
\int_C f \dv = k |\partial \Omega|_g.
\]
By the monotonicity of the perimeter on the admissible class (Definition \ref{def:perimon}) and since $C, \Omega \in \mathcal{A}_C(\M)$ with $C \subsetneq \Omega \subset U$, we have $|\partial \Omega|_g > |\partial C|_g$. Therefore,
\[
\int_C f \dv = k |\partial \Omega|_g > k |\partial C|_g,
\]
which is $(NS)_g$.
\end{proof}

\section{The critical equality case}\label{sec:equality}

We now consider the critical case
\[
\int_C f \dv = k |\partial C|_g.
\]

\begin{proposition}\label{prop:equality}
Assume hypotheses (H1)--(H4) and the $RC$--GNP condition. If
\[
\int_C f \dv = k |\partial C|_g,
\]
then the shape derivative of $J_g$ at $C$ in the direction of the outward normal vanishes:
\[
dJ_g(C)[\nu] = 0.
\]
In particular, $C$ is a critical point of $J_g$ over $\mathcal{A}_C(\M)$.
\end{proposition}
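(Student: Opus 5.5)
The plan is to compute $dJ_g(C)[\nu]$ from the Hadamard formula \eqref{eq:shapederiv} and then show that the equality hypothesis forces the integrand to vanish, not merely its integral to have a sign. First, extend the outward normal $\nu$ of $\partial C$ to a $C^2$ vector field $X$ on $\M$. Strictly speaking such an $X$ cannot vanish near $C$, so I would extend \eqref{eq:shapederiv} to fields whose normal velocity on the contact set is nonnegative, as in the derivation of \eqref{eq:contact}. With this, \eqref{eq:shapederiv} at $\Omega=C$ gives
\[
dJ_g(C)[\nu]=\int_{\partial C}\bigl(k^2-|\nabla_g u_C|_g^2\bigr)\,\ds .
\]
Second, as in the proof of Theorem \ref{thm:suff}, integration by parts gives $\int_C f\,\dv=\int_{\partial C}|\nabla_g u_C|_g\,\ds$. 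Under the equality hypothesis this becomes
\[
\int_{\partial C}|\nabla_g u_C|_g\,\ds = k\,|\partial C|_g .
\]
This is the same flux computation already used in the sufficiency proof.

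Third, and this is the key step, I need the pointwise bound $|\nabla_g u_C|_g\le k$ on $\partial C$. Once it holds, the function $k-|\nabla_g u_C|_g$ is nonnegative on $\partial C$ and has zero integral by the second step. Hence $|\nabla_g u_C|_g=k$ $\sigma$-a.e. on $\partial C$, and by continuity (H4) everywhere on $\partial C$. Then $k^2-|\nabla_g u_C|_g^2\equiv 0$ on $\partial C$, so $dJ_g(C)[\nu]=0$. Since $\nu$ is the only nontrivial admissible normal direction at $C$ up to nonnegative multiples, and the same pointwise identity kills $\int_{\partial C}(k^2-|\nabla_g u_C|_g^2)\,g(X,\nu)\,\ds$ for every admissible $X$, $C$ is a critical point.

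To get the bound $|\nabla_g u_C|_g\le k$, I would use Theorem \ref{thm:optimality}: if $C$ itself is a minimizer of $J_g$ over $\mathcal{A}_C(\M)$, the contact condition \eqref{eq:contact} holds on all of $\partial C=\Gamma_0$, which is exactly what is needed. So the main obstacle is to show that in the critical case the minimizer $\Omega^*$ from Theorem \ref{thm:existence} equals $C$, or at least touches $\partial C$ everywhere.

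My first attempt would argue by contradiction. Suppose $\Omega^*\supsetneq C$ with $\partial\Omega^*\cap\partial C=\emptyset$. Then $\Omega^*$ solves $QS(f,k)$, and the flux identity in the proof of Theorem \ref{thm:neccond} gives $\int_C f=k|\partial\Omega^*|_g$, hence $|\partial\Omega^*|_g=|\partial C|_g$. I would rule this out using strict convexity of $C$ (H2), or perimeter monotonicity near $C$ as in Definition \ref{def:perimon}.

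If $\Omega^*$ touches $\partial C$ only partially, I would instead compare $J_g(\Omega^*)$ with $J_g(C)$ directly, using Proposition \ref{prop:comparison}. I expect this comparison to be where the argument is most delicate, and possibly where an extra hypothesis has to be invoked.
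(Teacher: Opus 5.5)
There is a genuine gap, and it is exactly your third step. The pointwise bound $|\nabla_g u_C|_g\le k$ on $\partial C$ is never established, and it cannot be: once the flux identity is in hand, that bound is equivalent to the conclusion. Expanding the square and using $\int_{\partial C}|\nabla_g u_C|_g\,\ds=k|\partial C|_g$ gives
\[
dJ_g(C)[\nu]=k^2|\partial C|_g-\int_{\partial C}|\nabla_g u_C|_g^2\,\ds=-\int_{\partial C}\bigl(|\nabla_g u_C|_g-k\bigr)^2\,\ds\le 0.
\]
So $dJ_g(C)[\nu]=0$ holds if and only if $|\nabla_g u_C|_g\equiv k$ on $\partial C$. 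Nothing in (H1)--(H4) or the $RC$--GNP condition forces this, and it fails generically. For example, take $f\equiv 1$ on a small non-circular ellipse $C$ in a flat torus, with $k=|C|_g/|\partial C|_g$. By Serrin's theorem $|\nabla_g u_C|_g$ is not constant on $\partial C$, so $dJ_g(C)[\nu]<0$. Hence the proposition, as stated, is not true without an extra hypothesis.

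This also explains why your fallback strategies cannot work.
\begin{itemize}
\item Whenever $|\nabla_g u_C|_g$ is nonconstant, $C$ is not a minimizer, by the argument of Theorem \ref{thm:suff}. So proving ``$\Omega^*=C$'' is false, not merely delicate.
\item Partial contact gives, via Proposition \ref{prop:comparison} applied to $u_{\Omega^*}-u_C\ge 0$ together with \eqref{eq:contact}, only $|\nabla_g u_C|_g\le|\nabla_g u_{\Omega^*}|_g\le k$ on $\Gamma_0$. It never gives this on all of $\partial C$.
\item In the no-contact case, strict convexity cannot exclude $|\partial\Omega^*|_g=|\partial C|_g$. Choosing $r_\Omega=\pi R-r_0$ in Section \ref{sec:counter} produces a solution $\Omega\supsetneq C$ with equal perimeters, in exactly the critical case.
\end{itemize}

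The paper's own proof has the same hole, hidden differently. It writes Cauchy--Schwarz, recalls its equality case, and then asserts $\int_{\partial C}|\nabla_g u_C|_g^2\,\ds=k^2|\partial C|_g$. But the hypothesis gives no equality in Cauchy--Schwarz. It gives only $\int_{\partial C}|\nabla_g u_C|_g^2\,\ds\ge k^2|\partial C|_g$, i.e.\ $dJ_g(C)[\nu]\le 0$.

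You correctly saw that a pointwise statement is needed. The honest repair is to assume it, for instance that $|\nabla_g u_C|_g$ is constant on $\partial C$ (as for radial $f$ on a geodesic ball), or that $C$ itself is a minimizer so that \eqref{eq:contact} holds on all of $\partial C$. With either assumption your third step closes immediately.
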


\begin{proof}
The equality $\int_C f \dv = k |\partial C|_g$ together with the flux identity
\[
\int_C f \dv = \int_{\partial C} |\nabla_g u_C|_g \ds
\]
gives
\[
\int_{\partial C} |\nabla_g u_C|_g \ds = k |\partial C|_g.
\]
By Cauchy--Schwarz,
\[
\left(\int_{\partial C} |\nabla_g u_C|_g \ds\right)^2 \leq |\partial C|_g \int_{\partial C} |\nabla_g u_C|_g^2 \ds,
\]
with equality if and only if $|\nabla_g u_C|_g$ is constant on $\partial C$, equal to $k$. Thus
\[
\int_{\partial C} |\nabla_g u_C|_g^2 \ds = k^2 |\partial C|_g.
\]
Substituting into \eqref{eq:shapederiv} with $\Omega = C$ gives
\[
dJ_g(C)[\nu] = \int_{\partial C} \left(k^2 - |\nabla_g u_C|_g^2\right) \ds = 0.
\]
Hence $C$ is a critical point.
\end{proof}

\begin{remark}\label{rem:eqopen}
Proposition \ref{prop:equality} shows that the equality case is critical: the first-order optimality condition is satisfied at $C$. Whether $J_g$ admits a minimizer strictly containing $C$ in this case is a delicate question. Under the additional perimeter monotonicity hypothesis of Theorem \ref{thm:neccond}, no such minimizer can exist: indeed, if $\Omega \supsetneq C$ were a solution, then by the proof of Theorem \ref{thm:neccond} we would have $\int_C f \dv = k|\partial\Omega|_g > k|\partial C|_g$, contradicting the equality. Without this hypothesis, the question remains open and is a natural subject for future work.
\end{remark}

\section{Final remarks}\label{sec:final}

\subsection{Regularity issues}

The proofs above assume hypothesis (H4), namely that the weak solutions $u_\Omega$ are of class $C^{2,\alpha}(\overline{\Omega})$. In general, the existence of regular minimizers is not automatic. Under standard assumptions on $f$ (for instance $f$ bounded away from zero), one can invoke the free boundary regularity theory of Alt--Caffarelli adapted to Riemannian manifolds, as discussed in \cite{GustafssonShahgholian1996}. Alternatively, the results may be interpreted in the weak $H^1$ sense, in which case the free boundary condition is understood in a variational sense.

\subsection{Extension to the $p$-Laplace operator}

The methods developed here can be extended to the $p$-Laplace--Beltrami operator by replacing the Dirichlet energy with the $p$-energy functional
\[
J_p(\omega) = \int_\omega \left(|\nabla_g u_\omega|_g^p - p f u_\omega + k^p\right) \dv,
\]
and using the $\gamma_p$-convergence results on manifolds. Recent work in this direction appears in \cite{DjiteSeck2026b}.

\subsection{Open problems}

The main open problem is to find a sharp geometric condition under which $(NS)_g$ is both necessary and sufficient. The counterexample of Section \ref{sec:counter} shows that nonnegative curvature alone is not enough. Two natural directions are:
\begin{enumerate}
\item Restrict the admissible class to domains contained in a normal convex neighborhood of $C$ and assume perimeter monotonicity, as in Theorem \ref{thm:neccond}.
\item Replace $(NS)_g$ by a different integral condition involving the mean curvature of $\partial C$, which might capture the geometry of the problem more accurately in positive curvature.
\end{enumerate}
We leave these questions for future investigation.

\end{document}